\documentclass{elsarticle}

\usepackage[english]{babel} 
\usepackage[T1]{fontenc}
\usepackage[ansinew]{inputenc}
\usepackage{amssymb}
\usepackage{algorithm,algorithmic}
\usepackage{url}
\usepackage{comment}
\usepackage{tabularx}

\usepackage{hyperref}

\usepackage{adjustbox}

\addto\captionsenglish{}

\usepackage{graphicx} 

\usepackage{caption}
\usepackage{subcaption}

\usepackage{tikz}
\usetikzlibrary[patterns, decorations.pathreplacing,calc]

\usepackage{amsmath}
\usepackage{amsthm}
\usepackage{amsfonts}
\usepackage{mathrsfs}

\newtheorem{theorem}{Theorem}

\newtheorem{lemma}[theorem]{Lemma}
\newtheorem{proposition}[theorem]{Proposition}
\newtheorem{definition}[theorem]{Definition}

\newtheorem{observation}[theorem]{Observation}

\newtheorem*{thm:up}{Theorem \ref{thm:upper}}
\newtheorem*{thm:lower}{Theorem \ref{thm:lower}}


\begin{document}
\newcommand\m{\mathcal}

\begin{frontmatter}


\title{Bounding the Number of Minimal Transversals in Tripartite 3-Uniform Hypergraphs}
\author{Alexandre Bazin$^1$, Laurent Beaudou$^2$, Giacomo Kahn$^3$, Kaveh Khoshkhah$^4$}
\address{(1) LORIA, Universit\'e de Lorraine -- CNRS -- Inria, Nancy, France\\
(2) Higher School of Economics, Moscow, Russian Federation\\
(3) Universit\'e Lumi\`ere Lyon 2, Laboratoire DISP, Bron, France\\
(4) Institute of Computer Science, University of Tartu, Tartu, Estonia\\\texttt{contact@alexandrebazin.com, lbeaudou@hse.ru, giacomo.kahn@univ-lyon2.fr, khoshkhah@theory.cs.ut.ee}}

\begin{abstract}
We focus on the maximum number of minimal transversals in $3$-partite $3$-uniform hypergraphs on $n$ vertices. Those hypergraphs (and their minimal transversals) are commonly found in database applications. In this paper we prove that this number grows at least like $1.4977^{n}$ and at most like $1.5012^n$.
\end{abstract}
\begin{keyword}

Minimal transversals \sep hypergraphs\sep formal concept analysis\sep measure and conquer.

\end{keyword}
\end{frontmatter}

\section{Introduction}


Hypergraphs are a generalization of graphs where edges may have arities different than 2.
They were formalized by Berge in the seventies~\cite{berge1973graphs}.
Formally, a \emph{hypergraph} $\m H$ is a pair $(V,\m E)$, where $V$ is a set of vertices and $\m E$ a family of subsets of $V$ called hyperedges.
In the following, we suppose that $V = \bigcup_{e\in\mathcal E}e$, \textit{i.e.}, the hypergraphs that we handle have no isolated vertices.
The number of vertices of a hypergraph is called its order.
When all the hyperedges of a hypergraph have the same arity $p$, we call it a \emph{$p$-uniform hypergraph}.
When the set of vertices of a hypergraph can be partitioned into $k$ sets such that every edge intersects each part at most once, the hypergraph is called \emph{$k$-partite}.
In the following, we are interested in tripartite, 3-uniform hypergraphs, sometimes known as $(3,3)$-hypergraphs.
A subset of the vertices of a hypergraph $\m H$ is a \emph{transversal} of $\m H$ if it intersects every edge of $\m H$ and is said to be a \emph{minimal transversal} if none of its proper subsets is a transversal.

\medskip
The problem of enumerating the minimal transversals of a given hypergraph has been extensively studied~\cite{eiter1995identifying, fredman1996complexity, gunopulos1997data}. It is an important problem in theoretical computer science as it is equivalent to the dualization of monotone Boolean functions and the enumeration of maximal independent sets~\cite{eiter2008computational, kante2014enumeration}. As such, it has many real-world applications in artificial intelligence~\cite{eiter2002hypergraph}, biology~\cite{damaschke2006parameterized}, mobile communication~\cite{sarkar1998hypergraph} and data mining~\cite{gunopulos1997data,stavropoulos2016transversal, adaricheva2017discovery, van2019query}, among others.
Since the number of minimal transversals of a hypergraph may be exponential in the order of the hypergraph, the complexity of enumeration algorithms is often expressed as a function of the size of both their input and their output~\cite{strozecki2019enumeration}.
The maximum number of minimal transversals in a class of hypergraph can then be an important parameter for the real world applications of those theoretical algorithmic evaluations.

\medskip
Here, we are interested in the number of minimal transversals that arise in tripartite 3-uniform hypergraphs. Such hypergraphs correspond to 3-dimensional Boolean datasets, e.g. data describing \emph{objects} by sets of \emph{attributes} under different \emph{conditions}, and the enumeration of their minimal transversals is involved in the mining of different patterns such as triadic concepts~\cite{lehmann1995triadic,ignatov2015triadic} and association rules~\cite{missaoui2011mining}.
We denote by $f_3(n)$ the maximum number of minimal transversals in such hypergraphs of order $n$.

\medskip
In 1965, Moon and Moser~\cite{moon1965cliques} provided a construction of a graph of order $n$ that contains $3^{n/3}$ independent sets. This number is reached by using a disjoint union of $K_3s$.
The same construction can be adapted to $(3,3)$-hypergraphs as illustrated in Figure~\ref{fig:MnMs33sisi}: a set of disjoint 3-edges will form a $(3,3)$-hypergraph with $3^{n/3} = 1.4422^n$ minimal transversals, hence $f_3(n)\geq 1.4422^n$.

\begin{figure}[h]
\centering
\begin{subfigure}[b]{0.45\textwidth}
\adjustbox{max width = \textwidth}{
\begin{tikzpicture}[every node/.style={fill,circle,inner sep=0pt,minimum size=4pt}]

\node (a) at (0,0) {};
\node (b) at (-1,1) {};
\node (c) at (-1,-1) {};
\node (d) at (1.2,1) {};
\node (e) at (1.2,-1) {};
\node (f) at (2.2,0) {};
\node[draw=none, fill=none] (g) at (4,0) {\huge{$\dots$}};
\node (h) at (7,0) {};
\node (i) at (6,1) {};
\node (j) at (6,-1) {};

\draw (a) -- (b);
\draw (a) -- (c);
\draw (b) -- (c);
\draw (d) -- (f);
\draw (d) -- (e);
\draw (e) -- (f);
\draw (h) -- (i);
\draw (h) -- (j);
\draw (i) -- (j);

\end{tikzpicture}
}\hfill
\end{subfigure}
\begin{subfigure}[b]{0.45\textwidth}
\adjustbox{max width = \textwidth}{
\begin{tikzpicture}[every node/.style={fill,circle,inner sep=0pt,minimum size=4pt}]

\node (a) at (0,0) {};
\node (b) at (-1,1) {};
\node (c) at (-1,-1) {};
\node (d) at (1.2,1) {};
\node (e) at (1.2,-1) {};
\node (f) at (2.2,0) {};
\node[draw=none, fill=none] (g) at (4,0) {\huge{$\dots$}};
\node (h) at (7,0) {};
\node (i) at (6,1) {};
\node (j) at (6,-1) {};

  \draw ($(a)+(0.3,0)$)
    to[out=90,in=0] ($(b)+(0,0.3)$)
    to[out=180,in=180] ($(c)+(0,-0.3)$)
    to[out=0,in=-90] ($(a)+(0.3,0)$);

  \draw ($(f)+(0.3,0)$)
    to[out=90,in=0] ($(d)+(0,0.3)$)
    to[out=180,in=180] ($(e)+(0,-0.3)$)
    to[out=0,in=-90] ($(f)+(0.3,0)$);
    
  \draw ($(h)+(0.3,0)$)
    to[out=90,in=0] ($(i)+(0,0.3)$)
    to[out=180,in=180] ($(j)+(0,-0.3)$)
    to[out=0,in=-90] ($(h)+(0.3,0)$);
\end{tikzpicture}
}
\end{subfigure}
\caption{Moon and Moser's construction (left) and its analogue for $(3,3)$-hypergraphs (right).\label{fig:MnMs33sisi}}
\end{figure}

\medskip
Moreover, let $\m H$ be a $(3,3)$-hypergraph which vertices are partitioned into three sets $S_1$, $S_2$ and $S_3$ that each intersects each hyperedge at most once. For a given vertex set $X\subseteq S_1\cup S_2$, there is only one possible $Y\subseteq S_3$ such that $X\cup Y$ is a minimal transversal.
By supposing that all subsets of $S_1\cup S_2$ can appear in minimal transversals, we obtain $2^{n-|S_3|}$ minimal transversals. If we choose the smallest of the three sets to be $S_3$, then $S_3$ has at most $n/3$ vertices and so we obtain an upper bound of $2^{n/3}\times2^{n/3}=4^{n/3}\approx 1.5874^n$ minimal transversals.

\medskip

Thus, for any integer $n$, $1.4422^n\leq f_3(n)\leq 1.5874^n$.
In this paper, we improve those bounds through two theorems.

\begin{theorem}
\label{thm:lower}
There exists a constant $c$ such that for any integer $n\geq 15$, $$f_3(n)\geq c1.4977^n.$$
\end{theorem}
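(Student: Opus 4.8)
The plan is to reduce this asymptotic statement to the existence of a single well-chosen finite example, in the same spirit as the ``matching'' construction that already yields $3^{n/3}\approx 1.4422^{n}$. Concretely, I would search for a tripartite $3$-uniform hypergraph $\m G$ on a set $W$ of $n_0$ vertices whose number of minimal transversals $t_0$ satisfies $t_0 > 1.4977^{\,n_0}$, i.e.\ $t_0^{1/n_0}>1.4977$. A single such $\m G$ is exactly what is needed, because combining several different gadgets by disjoint union cannot asymptotically beat the best one of them; so the whole difficulty is concentrated in producing $\m G$. Since the trivial gadget (one edge, $3$ transversals on $3$ vertices) only reaches rate $3^{1/3}$, I expect $\m G$ to have somewhat more vertices, with a few in each part, and I would both find it and certify its count by an (exhaustive, machine-checked) enumeration over small tripartite $3$-uniform hypergraphs and of their minimal transversals.

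The second ingredient is multiplicativity of the minimal-transversal count under disjoint union: if $\m H_1$ and $\m H_2$ are vertex-disjoint, a set $T$ is a minimal transversal of $\m H_1\sqcup\m H_2$ if and only if $T=T_1\cup T_2$ with $T_i$ a minimal transversal of $\m H_i$, so the counts multiply. Taking $k$ disjoint copies of $\m G$ then gives a tripartite $3$-uniform hypergraph on $k n_0$ vertices with exactly $t_0^{k}$ minimal transversals, hence $f_3(k n_0)\ge t_0^{k}$ for all $k\ge 1$.

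To pass from the subsequence $\{k n_0\}$ to every integer $n$, I would use a short padding lemma: from any tripartite $3$-uniform hypergraph $\m H$ with an edge $\{a,b,c\}$, adding one new vertex $a'$ in the part of $a$ together with the edge $\{a',b,c\}$ produces a tripartite $3$-uniform $\m H'$ with at least as many minimal transversals as $\m H$. This follows from an injection: map a minimal transversal $T$ of $\m H$ to $T$ itself if $T$ meets $\{b,c\}$, and to $T\cup\{a'\}$ otherwise; in either case the image is readily checked to be a minimal transversal of $\m H'$, and distinct $T$'s have distinct images. Iterating shows $f_3$ is non-decreasing, so for $n\ge n_0$ we get $f_3(n)\ge f_3\bigl(n_0\lfloor n/n_0\rfloor\bigr)\ge t_0^{\lfloor n/n_0\rfloor}\ge t_0^{\,n/n_0-1}=t_0^{-1}\bigl(t_0^{1/n_0}\bigr)^{n}\ge t_0^{-1}\,1.4977^{\,n}$; for the finitely many smaller $n$ the inequality is trivial, since its right-hand side is then below $1$. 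Thus the theorem holds with $c=t_0^{-1}$.

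The genuine obstacle is the first step. The disjoint-union and padding arguments are routine bookkeeping once $\m G$ is in hand; the real content is to exhibit one finite tripartite $3$-uniform hypergraph whose minimal-transversal rate strictly exceeds $1.4977$ and to verify that rate rigorously, and it is quite likely that the extremal small examples have to be located computationally rather than by hand.
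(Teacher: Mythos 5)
Your reduction is structurally identical to the paper's: a computer-found finite gadget, multiplicativity of the minimal-transversal count under disjoint union (the paper's Observation~\ref{lemma:mult}), and a remainder argument to cover all $n$. The bookkeeping you describe is correct, and your padding lemma (adding a clone $a'$ of $a$ together with the edge $\{a',b,c\}$ to get $f_3(n+1)\ge f_3(n)$, via the injection $T\mapsto T$ or $T\mapsto T\cup\{a'\}$) is sound and in fact handles all residues more cleanly than the paper, whose proof pads with disjoint 3-edges and, as written, only treats $n$ divisible by~3.

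The gap is that the whole theorem rests on the existence of the gadget $\mathcal{G}$ with $t_0^{1/n_0}>1.4977$, and you do not produce one; you only promise to locate it by exhaustive search. That existence claim is the entire content of the result: the constant $1.4977$ is reverse-engineered from the specific extremal example, so without exhibiting it there is nothing left to prove. The paper supplies it in Claim~\ref{claim:428}: a concrete $15$-vertex tripartite $3$-uniform hypergraph $\mathcal{H}_{15}$ (the $25$ edges of Figure~\ref{tab:edges}, a solution of the three-dimensional rook problem) with $428$ minimal transversals, and $428>1.4977^{15}$ since $428^{1/15}\approx 1.49776$. Until you exhibit such a hypergraph and certify its count, the argument is incomplete at its only non-mechanical step --- which, to your credit, you identify yourself as the genuine obstacle.
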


\begin{theorem}
\label{thm:upper}
For any integer $n$, $$f_3(n)\leq 1.5012^n.$$
\end{theorem}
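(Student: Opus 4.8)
The plan is to prove Theorem~\ref{thm:upper} by the \emph{measure and conquer} method: we design a recursive branching process that, starting from $\mathcal H$, enumerates a family of vertex subsets that contains every minimal transversal, and we bound the number of leaves of its recursion tree using a cleverly weighted potential function. The branching process builds candidate transversals $T$ by repeatedly picking a vertex $v$ and deciding whether $v\in T$ or $v\notin T$. At each node we keep the \emph{residual hypergraph} $\mathcal H'$ consisting of the edges not yet hit by $T$ with all already-rejected vertices deleted, so residual edges may have size $1$, $2$ or $3$; adding $v$ to $T$ deletes $v$ and all incident residual edges, rejecting $v$ deletes $v$ and shrinks each incident edge. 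A leaf is a node with no residual edge, where the accumulated $T$ is output. Every minimal transversal is produced at some leaf (rejecting is always a legal choice, and a leaf's output hits every edge), so $f_3(n)$ is at most the number of leaves for the worst $\mathcal H$; repetitions among leaves only weaken the bound, so they are harmless.

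Next I would set up the measure. Give each undecided vertex $v$ a weight $w(v)\in[0,1]$ depending only on its local type in $\mathcal H'$ — its degree and the sizes of its incident residual edges — with weight $0$ once $v$ is decided or isolated, and put $\mu(\mathcal H')=\sum_v w(v)$. First apply non-branching reduction rules — a size-$1$ edge $\{v\}$ forces $v\in T$, isolated vertices are dropped (no minimal transversal contains one, since it has no private edge), plus suitable domination rules — each strictly decreasing $\mu$. Otherwise branch on a well-chosen vertex $v$, say of maximum degree, into ``$v\in T$'' and ``$v\notin T$'', where $\mu$ drops by $\Delta_{\mathrm{in}}$ and $\Delta_{\mathrm{out}}$ respectively, each equal to $w(v)$ plus the weight lost by neighbours whose type changes plus the weight released by triggered reductions. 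Proving, for every surviving vertex type, a branching inequality $1\ge \beta^{-\Delta_{\mathrm{in}}}+\beta^{-\Delta_{\mathrm{out}}}$ then yields $L(\mathcal H')\le\beta^{\mu(\mathcal H')}$ by induction on $\mu$, and since $\mu(\mathcal H)\le(\max_v w(v))\,n$ we get $f_3(n)\le \beta^{(\max w)\,n}$; the weights are to be tuned so that $\beta^{\max w}\le 1.5012$.

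The hard part is the exhaustive local case analysis producing the recurrences, and in particular the full exploitation of tripartiteness: in a $3$-uniform tripartite hypergraph the residual edges at a vertex and their pairwise intersections are tightly constrained (two edges through $v$ meet only in the other two parts, rejections cascade predictably into forced inclusions), and it is exactly this that drives the optimum below the naive $4^{n/3}=1.5874^n$. Concretely one must (i) enumerate all vertex types that can survive the reductions, (ii) for each type bound from below the residual weight loss in both branches — the delicate bookkeeping being to ensure every claimed loss genuinely occurs and is never double-counted across the two branches or with a later reduction — and (iii) solve the resulting large system of exponential inequalities in the weights to certify feasibility of a base $\le 1.5012$. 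Steps (ii) and (iii) carry the real difficulty: (iii) is essentially a numerical optimisation, but it only delivers $1.5012$ rather than a weaker constant if the configuration analysis and the weight-loss estimates in (i)–(ii) are pushed to be tight.
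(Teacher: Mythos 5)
Your proposal is a generic measure-and-conquer template, but the two ingredients that actually produce the constant $1.5012$ are both absent, and one of them is not deferred bookkeeping but the central idea of the proof. The paper does not obtain the bound by tuning local vertex weights so that $\beta^{\max w}\leq 1.5012$; it obtains it from the global observation that in a tripartite $3$-uniform hypergraph each of the three parts is itself a minimal transversal, so one may fix a part $S$ with $|S|\geq n/3$ and arrange every branching step (a multi-way branch on conditions built from a vertex $a\in S$ and its incident edges) to remove at least one vertex \emph{outside} $S$. The branching tree then has height at most $n-|S|\leq 2n/3$, and a branching factor of $\tau_0=1.8393$ (the root of $x^{-1}+x^{-2}+x^{-3}=1$, arising from the worst case where $a$ lies only in $3$-edges) yields $1.8393^{2n/3}\leq 1.5012^n$. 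Your plan instead measures progress against all $n$ vertices, via $\mu(\mathcal H)\leq(\max_v w(v))\,n$; that is exactly the Lonc--Truszczy\'nski setting for rank-$3$ hypergraphs, which gives $1.6702^n$, and your only lever for beating even the trivial $1.5874^n$ is the hope that local intersection patterns forced by tripartiteness tighten the recurrences. That hope is unsubstantiated and doubtful: the worst local configuration (a vertex lying in a single $3$-edge and no $2$-edges) survives all your reduction rules in tripartite instances and already forces a branching number of $1.8393$ per unit of removed weight, so without the $2n/3$ height argument there is no visible route to $1.5012$.

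Second, even inside your own framework, everything that would certify the constant is deferred: the list of surviving vertex types, the per-type lower bounds on $\Delta_{\mathrm{in}}$ and $\Delta_{\mathrm{out}}$, and the feasibility of the resulting system at base $1.5012$ are all announced as ``the hard part'' rather than carried out. By contrast, the paper's case analysis is short --- three cases on the numbers of $2$-edges and $3$-edges through a vertex $a\in S$, with the measure $\mu(\mathcal H)=|V(\mathcal H)|-\alpha\, m(\mathcal H)$ penalized by the matching number $m$ of $2$-edges and $\alpha=0.145785$ --- precisely because the exponent $2n/3$ does the heavy lifting. As written, your proposal neither contains the key structural idea nor completes the computation, so it does not establish the theorem.
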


We prove Theorem~\ref{thm:lower} through a construction based on a hypergraph on fifteen vertices found \textit{via} computer search. The proof of Theorem~\ref{thm:upper} relies on a technique introduced by Kullman~\cite{DBLP:journals/tcs/Kullmann99} and used by Lonc and Truszczy\'nski~\cite{DBLP:journals/dm/LoncT08} on rank 3 hypergraphs.
This class contains $(3,3)$-hypergraphs but is much larger, and the bound they obtain is greater than the trivial one implied by the tripartition in our case.
This proof technique resembles measure and conquer, an approach used in the analysis of exact exponential-time algorithms, see for example Fomin, Grandoni and Kratsch~\cite{DBLP:journals/jacm/FominGK09}.

\section{Lower Bound}

We consider tripartite 3-uniform hypergraphs.
In this section, we improve the lower bound of $1.4422^n$ for $f_3(n)$
by exhibiting a construction that reaches $c1.4977^n$ minimal transversals, where $c$ is a constant.
To this end, we first make an observation that allows us to multiply the number of minimal transversals while only summing the orders of hypergraphs.

\medskip

\begin{observation}
\label{lemma:mult}
let $\m H_1$ and $\m H_2$ be two hypergraphs of order $n_1$ and $n_2$ with, respectively, $t_1$ and $t_2$ minimal transversals. Then, the disjoint union of $\m H_1$ and $\m H_2$ has $n_1+n_2$ vertices and exactly $t_1t_2$ minimal transversals. To put it into words, we sum the orders while multiplying the number of minimal transversals.
\end{observation}

\medskip

A computer search of the space of small hypergraphs allows us to make the following observation.

\medskip

\begin{observation}
\label{claim:428}
There is a tripartite 3-uniform hypergraph on fifteen vertices with four hundred and twenty-eight minimal transversals.
\end{observation}

As such, \[f_3(15)\geq 428.\]

\medskip

The hypergraph mentioned in Observation~\ref{claim:428} is described in Figure~\ref{tab:edges} and Figure~\ref{tab:hyp}.
We denote this hypergraph by $\m H_{15}$.
Its minimal transversals can be computed using any available software, such as the one maintained by Takeaki Uno\footnote{\url{http://research.nii.ac.jp/~uno/dualization.html} with the data file at \url{http://giacomo.kahn.science/resources/H15.txt}}.

\begin{figure}[ht]
\centering
\begin{tabular}{ccccc}
$\{\alpha, 1,a\}$	&	$\{\beta, 1,b\}$ & $\{\gamma, 1,c\}$ & $\{\delta, 1,d\}$ & $\{\epsilon, 1,e\}$\\ 
$\{\alpha, 2,b\}$	&	$\{\beta, 2,c\}$ & $\{\gamma, 2,d\}$ & $\{\delta, 2,e\}$ & $\{\epsilon, 2,a\}$\\
$\{\alpha, 3,c\}$	&	$\{\beta, 3,d\}$ & $\{\gamma, 3,e\}$ & $\{\delta, 3,a\}$ & $\{\epsilon, 3,b\}$\\
$\{\alpha, 4,d\}$	&	$\{\beta, 4,e\}$ & $\{\gamma, 4,a\}$ & $\{\delta, 4,b\}$ & $\{\epsilon, 4,c\}$\\
$\{\alpha, 4,e\}$	&	$\{\beta, 5,a\}$ & $\{\gamma, 5,b\}$ & $\{\delta, 5,c\}$ & $\{\epsilon, 5,d\}$\\
\end{tabular}
\caption{$\m H_{15}$ has fifteen vertices that can be partitioned into three sets $\{1,2,3,4,5\}$, $\{a,b,c,d,e\}$ and $\{\alpha,\beta,\gamma,\delta,\epsilon\}$. It has four hundred and twenty-eight minimal transversals.}\label{tab:edges}
\end{figure}

\medskip
\begin{thm:lower}
There exists a constant $c$ such that for any integer $n\geq 15$, $$f_3(n)\geq c1.4977^n.$$
\end{thm:lower}
\begin{proof}

Let $n$ be an integer greater than 15. There are two unique integers $k$ and $r$ such that $r$ is in $[0,14]$ and $n = 15k+r$. We aim to build a hypergraph on $n$ vertices with many minimal transversals. The idea is to make the disjoint union of copies of $\m H_{15}$. To reach exactly $n$ vertices, the last copy is modified in the following way. Let $\m H_{15}^r$ be the hypergraph $\m H_{15}$ with $r$ more vertices $v_1,\dots,v_r$. In order for the edges to span all vertices, we add edges $\{v_i,1,a\}$ for all $i$ from $1$ to $r$. Observe that $\m H_{15}^r$ has at least as many minimal transversals as $\m H_{15}$. As a consequence of making the disjoint union of $k-1$ copies of $\m H_{15}$ and one copy of $\m H_{15}^r$, we obtain a tripartite 3-uniform hypergraph on $n$ vertices with at least $428^k$ minimal transversals. Since $k=\frac{n-r}{15}$ and $428^{\frac{1}{15}}>1.4977$, fixing $c=428^{-\frac{14}{15}}$ we conclude that for all $n$ greater than $15$
$$ f_3(n) \geq c\cdot 1.4977^n. $$

\end{proof}

Another way to see the hypergraph $\m H_{15}$ is with a 3-dimensional cross table, where a cross in cell $(\alpha, 1, a)$ represents the edge $\{\alpha, 1, a\}$.

\begin{figure}[ht]
\resizebox{\textwidth}{!}{
\begin{tabular}{|c||ccccc||ccccc||ccccc||ccccc||ccccc|}
\hline
 & a & b & c & d & e & a & b & c & d & e & a & b & c & d & e & a & b & c & d & e & a & b & c & d & e \\
\hline
1 & $\times$ & & & & & & $\times$ & & & & & & $\times$ &  & & & & & $\times$ & & & & & & $\times$\\
2 & & $\times$ & & & & & & $\times$ & & & & & & $\times$ & & & & & & $\times$ & $\times$ & & & &\\
3 & & & $\times$ & & & & & & $\times$ & & & & & & $\times$ & $\times$ & & & & & & $\times$ & & &\\
4 & & & & $\times$ & & & & & & $\times$ & $\times$ & & & & & & $\times$ & & & & & & $\times$ & &\\
5 & & & & & $\times$ & $\times$ & & & & & & $\times$ & & & & & & $\times$ & & & & & & $\times$ &\\
\hline
\multicolumn{1}{|c||}{} & \multicolumn{5}{c||}{$\alpha$} & \multicolumn{5}{c||}{$\beta$} & \multicolumn{5}{c||}{$\gamma$} & \multicolumn{5}{c||}{$\delta$} & \multicolumn{5}{c|}{$\epsilon$}\\
\hline
\end{tabular}
}
\caption{Another representation of $\m H_{15}$. Each cross represents a 3-edge.}\label{tab:hyp}
\end{figure}

\section{Upper Bound}

In this section, we prove that $f_3(n)\leq 1.5012^n$ (Theorem~\ref{thm:upper}) by first proving the technical Lemma~\ref{lemma:number}. The general structure of the proof is similar to Lonc and Truszczy{\'n}ski's proof of their bound to the maximal number of minimal transversals in 3-uniform hypergraphs in~\cite{DBLP:journals/dm/LoncT08} and is as follows.

\medskip
 
Given a rooted tree. If we have a probability distribution attributed to each internal node for picking one of its children, we may apply the following random process: start from the root, and follow the node distribution to pick one of its children until you reach a leaf. In this process, the probability to end up in some fixed leaf is the product of probabilities on the unique path from this leaf to the root. The sum over all leaves of these probabilities equals 1. Consider the smallest such probability $p$. Then the number of leaves is bounded above by $p^{-1}$. Actually, for any tree, there is such a probability distribution that assigns the same probability to each leaf in the end. When facing an unknown structure, we may not be able to find that optimum distribution, but this tool gives us some power to adjust the counting of the leaves.

\medskip
In this paper, we build a tree where the root is some specific hypergraph $\m H_0$. All nodes will be hypergraphs and the leaves can be identified with minimal transversals of $\m H_0$. We build an adequate probability distribution by designing some measure function $\mu$ on our hypergraphs. For internal node $\m H$, we shall fix some value $\tau$ and give probability $\tau^{\mu(\m H')-\mu(\m H)}$ to the transition towards its child $H'$. This value $\tau$ is uniquely defined since we want the sum to equal 1 at each step. In the end, if we manage to bound all values $\tau$ above by some $\tau_0$ and if the leaves have non-negative measures, the product of all those probabilities shall be bounded below by $\tau_0^{-\mu(\m H)}$ and thus the number of leaves is no more than $\tau_0^{\mu(\m H)}$.

\medskip

\begin{figure}[h]
\centering
\begin{tikzpicture}[every node/.style={fill,circle,inner sep=0pt,minimum size=4pt}]

\node[label =$a$] (a) at (0,0) {};
\node[label = $1$] (1) at (-1,1) {};
\node[label = $\gamma$] (alpha) at (-1,-1) {};

\node[label = $\delta$] (beta) at (-3,1) {};
\node[label = $2$] (2) at (-3,-1) {};
\node[label = b] (b) at (-2,0) {};

  \draw ($(a)+(0.3,0)$)
    to[out=90,in=0] ($(1)+(0,0.37)$)
    to[out=180,in=180] ($(alpha)+(0,-0.3)$)
    to[out=0,in=-90] ($(a)+(0.3,0)$);
    
  \draw ($(b)+(0,-0.2)$)
    to[out=0,in=-90] ($(1)+(0.3,0)$)
    to[out=90,in=90] ($(beta)+(-0.3,0)$)
    to[out=-90,in=180] ($(b)+(0,-0.2)$);
    
  \draw ($(b)+(0,0.37)$)
    to[out=0,in=90] ($(alpha)+(0.3,0)$)
    to[out=-90,in=-90] ($(2)+(-0.3,0)$)
    to[out=90,in=180] ($(b)+(0,0.37)$);
    
\end{tikzpicture}
\caption{This small hypergraph shall serve as an example to illustrate the following concepts of condition, procedure and measure.\label{fig:exempleH}}
\end{figure}

Let us dive into the more technical part, illustrated with Figure~\ref{fig:exempleH}'s example hypergraph.
We use the notion of \emph{condition}, introduced thereafter.

\medskip

\begin{definition}
Given a set $V$ of vertices, a \emph{condition} on $V$ is a pair $(A^+,A^-)$ of disjoint sets of vertices.
A condition is \emph{trivial} if $A^+\cup A^-=\emptyset$, and \emph{non-trivial} otherwise.
\end{definition}

\medskip

All the conditions that we handle are non-trivial.
A set $T$ of vertices \emph{satisfies} a condition $(A^+,A^-)$ if $A^+\subseteq T$ and $T\cap A^-=\emptyset$.
As such, having vertex sets satisfy a condition amounts to forcing a set of vertices to be present (the vertices in $A^+$) and forbidding other vertices (the vertices in $A^-$).
For instance, $\{1, b\}$ is a minimal transversal of the hypergraph depicted in Figure~\ref{fig:exempleH} that satisfies the condition $(\{1\}, \{\gamma\})$ because it contains $1$ but not $\gamma$.


\medskip
Let $\m H$ be a hypergraph and $(A^+,A^-)$ a condition.
The hypergraph $\m H_{(A^+,A^-)}$ is constructed from $\m H$ and $(A^+,A^-)$ through the following procedure:

\begin{enumerate}
\item remove every edge that contains a vertex that is in $A^+$;
\item remove from every remaining edge the vertices that are in $A^-$;
\item remove redundant edges.
\end{enumerate}

Vertices of $\m H$ that appear in a condition $(A^+,A^-)$ are not in $\m H_{(A^+,A^-)}$ as they are either removed from all the edges or all the edges that contain them have disappeared. For instance, if $\m H$ is the hypergraph depicted in Figure~\ref{fig:exempleH} and $A=(\{1\}, \{\gamma\})$, the hypergraph $\m H_{(\{1\}, \{\gamma\})}$ is the one depicted in Figure~\ref{fig:exempleHcond}.


\begin{figure}[h]
\centering
\begin{tikzpicture}[every node/.style={fill,circle,inner sep=0pt,minimum size=4pt}]

\node[label = $2$] (2) at (-3,-1) {};
\node[label = b] (b) at (-2,0) {};

 \draw (b) -- (2);   

\end{tikzpicture}
\caption{Let $\m H$ be the hypergraph in Figure~\ref{fig:exempleH}. Then, this figure depicts $\m H_{(\{1\}, \{\gamma\})}$. The edges that contained $1$ were removed from the hypergraph and the vertex $\alpha$ was removed from the remaining edge.\label{fig:exempleHcond}
}
\end{figure}

\medskip

\begin{lemma}
\label{lemma:proc}
Let $\m H$ be a hypergraph, $(A^+,A^-)$ be a condition and $T$ be a set of vertices of $\m H$.
If $T$ is a minimal transversal of $\m H$ and $T$ satisfies $(A^+,A^-)$, then $T\setminus A^+$ is a minimal transversal of $\m H_{(A^+,A^-)}$.
\end{lemma}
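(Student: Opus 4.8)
The plan is to verify directly that $T' := T \setminus A^+$ is a transversal of $\m H_{(A^+,A^-)}$, and then that it is minimal. First I would argue transversality: take any edge $e'$ of $\m H_{(A^+,A^-)}$. By construction, $e'$ comes from some edge $e$ of $\m H$ that was not deleted in step~1 (so $e \cap A^+ = \emptyset$) and from which the vertices of $A^-$ were removed in step~2 (so $e' = e \setminus A^-$); step~3 only discards duplicates and does not affect which vertex sets occur. Since $T$ is a transversal of $\m H$, there is a vertex $v \in T \cap e$. Because $e \cap A^+ = \emptyset$ we have $v \notin A^+$, and because $T$ satisfies $(A^+,A^-)$ we have $T \cap A^- = \emptyset$, so $v \notin A^-$. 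Hence $v \in (T \setminus A^+) \cap (e \setminus A^-) = T' \cap e'$, proving $T'$ meets every edge of $\m H_{(A^+,A^-)}$.

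For minimality, suppose for contradiction that some $v \in T'$ can be removed, i.e.\ $T' \setminus \{v\}$ is still a transversal of $\m H_{(A^+,A^-)}$. I would then show that $T \setminus \{v\}$ is a transversal of $\m H$, contradicting minimality of $T$. Take any edge $e$ of $\m H$. If $e$ contains a vertex of $A^+$, then since $A^+ \subseteq T$ and $v \notin A^+$ (as $v \in T' = T\setminus A^+$), that vertex lies in $(T\setminus\{v\}) \cap e$. Otherwise $e$ survives step~1, and $e' := e \setminus A^-$ is (up to the duplicate-removal of step~3) an edge of $\m H_{(A^+,A^-)}$; since $T'\setminus\{v\}$ is a transversal, it meets $e'$ in some vertex $w$, and then $w \in (T\setminus\{v\}) \cap e$ because $e' \subseteq e$. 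Either way $T\setminus\{v\}$ is a transversal of $\m H$, the desired contradiction. One small point to handle with care is the duplicate removal in step~3: a "removed duplicate" edge of $\m H$ has the same post-reduction vertex set as a retained one, so being hit in $\m H_{(A^+,A^-)}$ still witnesses being hit in every duplicate, which is exactly what the argument above uses.

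The only mild subtlety — and the step I would be most careful about — is bookkeeping the correspondence between edges of $\m H$ and edges of $\m H_{(A^+,A^-)}$ across all three reduction steps simultaneously, in particular making sure that no edge of $\m H_{(A^+,A^-)}$ is empty (which would be the case only if some edge of $\m H$ were contained in $A^-$; but such an edge could never be covered by a transversal satisfying the condition, so the hypotheses already rule this out, and in any case the lemma statement quietly assumes $\m H_{(A^+,A^-)}$ is a legitimate hypergraph of its class). Everything else is a routine unfolding of the definitions of transversal, minimal transversal, and the reduction procedure.
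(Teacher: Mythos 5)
Your proof is correct and takes essentially the same approach as the paper, which simply declares the lemma ``straightforward from the construction'' and omits all details; your argument is exactly the routine unfolding of the definitions that the authors have in mind. The extra care you take with duplicate removal and with the (impossible under the hypotheses) empty-edge case does not change the substance.
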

\begin{proof}
The proof is straightforward from the construction of $\m H_{(A^+,A^-)}$.
\end{proof}


\medskip
A family of conditions is \emph{complete} for the hypergraph $\m H$ if the family is non-empty, each condition in the family is non-trivial, and every minimal transversal of $\m H$ satisfies at least one condition of the family.
For instance, $\{(\{1\}, \{\gamma\})\}$ is not a complete family of conditions for the hypergraph depicted in Figure~\ref{fig:exempleH} because there is a minimal transversal, $\{\gamma, \delta\}$, that does not contain the vertex $1$ and thus does not satisfy any of the conditions. However, $\{(\{1\},\emptyset),(\emptyset,\{1\})\}$ is a complete family of conditions because all minimal transversals either contain or do not contain the vertex $1$.

\medskip

Let $\m C$ be the class of $k$-partite, with $k\leq 3$, hypergraphs such that their vertex set can be partitioned in such a way that one of the parts is a minimal transversal.
We suppose that they are implicitly partitioned in such a way and call $S$ the part that is a minimal transversal.
Tripartite 3-uniform hypergraphs belong to this class.
It is clear that $k$-partite hypergraphs do not become $(k+1)$-partite when vertices are removed from edges or edges are deleted.
As such, if a hypergraph $\m H=(V,\m E)$ belongs to the class $\m C$ and $A=(A^+,A^-)$ is a condition on $V$ such that the edges that contain vertices of $A^-\cap S$ also contain vertices of $A^{+}$, then $\m H_{A}$ is in $\m C$. From now on, we suppose that all the conditions we handle respect this property.



\medskip

A hypergraph is non-trivial if it is not empty.
A \emph{descendant function} for $\m C$ is a function that assigns to each non-trivial hypergraph in $\m C$ a complete family of conditions.
Let $\rho$ be such a function.

Using $\rho$, we can construct a rooted labeled  tree $\mathcal T_\m H$ for all hypergraphs $\m H$ in $\m C$.
When $\m H$ is trivial, then $\mathcal T_\m H$ is a single node labeled with $\m H$.
When $\m H$ is non-trivial, we create a node labeled with $\m H$ and make it the parent of the root of all the trees $\mathcal T_{\m H_A}$, for $A\in\rho(\m H)$.
Note that this construction is possible because $\m C$ is closed under the operation of removing edges and removing vertices from edges, and the number of vertices can only decrease when the transformation from $\m H$ to $\m H_A$ occurs.
Such a rooted tree corresponding to our toy example is presented in Figure~\ref{fig:bigTree}.
One can check that all the conditions above are respected.

\begin{figure}
\begin{tikzpicture}
    \node[draw] (root) at (0,-0.5) {
        \begin{tikzpicture}[every node/.style={fill,circle,inner sep=0pt,minimum size=4pt}]
        
        \node[label =$a$] (a) at (0,0) {};
        \node[label = $1$] (1) at (-1,0.8) {};
        \node[label = $\gamma$] (alpha) at (-1,-0.8) {};
        
        \node[label = $\delta$] (beta) at (-3,0.8) {};
        \node[label = $2$] (2) at (-3,-0.8) {};
        \node[label = b] (b) at (-2,0) {};
        
          \draw ($(a)+(0.3,0)$)
            to[out=90,in=0] ($(1)+(0,0.37)$)
            to[out=180,in=180] ($(alpha)+(0,-0.3)$)
            to[out=0,in=-90] ($(a)+(0.3,0)$);
            
          \draw ($(b)+(0,-0.2)$)
            to[out=0,in=-90] ($(1)+(0.3,0)$)
            to[out=90,in=90] ($(beta)+(-0.3,0)$)
            to[out=-90,in=180] ($(b)+(0,-0.2)$);
            
          \draw ($(b)+(0,0.37)$)
            to[out=0,in=90] ($(alpha)+(0.3,0)$)
            to[out=-90,in=-90] ($(2)+(-0.3,0)$)
            to[out=90,in=180] ($(b)+(0,0.37)$);
            
        \end{tikzpicture}};

    \coordinate (b1c1) at (-6,-7.5);
    \coordinate (b1c2) at (-4,-7.5);
    \coordinate (b1c3) at (-3,-7.5);
    
    \node[draw] (b1) at (-3,-5) {
        \begin{tikzpicture}[every node/.style={fill,circle,inner sep=0pt,minimum size=4pt}]
        
        \node[label = $\gamma$] (alpha) at (-1,-1) {};
        
        \node[label = $2$] (2) at (-3,-1) {};
        \node[label = b] (b) at (-2,0) {};
        
          \draw ($(b)+(0,0.37)$)
            to[out=0,in=90] ($(alpha)+(0.3,0)$)
            to[out=-90,in=-90] ($(2)+(-0.3,0)$)
            to[out=90,in=180] ($(b)+(0,0.37)$);
            
        \end{tikzpicture}};
    \draw (b1) -- node[midway, left] {$(\{b\},\emptyset)$} (b1c1);
    \draw (b1) -- node[midway, left] {$(\{2\},\emptyset)$} (b1c2);
    \draw (b1) -- node[midway, right] {$(\{\gamma\},\emptyset)$} (b1c3);
    
    \node[draw] (b2) at (3,-5) {
        \begin{tikzpicture}[every node/.style={fill,circle,inner sep=0pt,minimum size=4pt}]
        
        \node[label =$a$] (a) at (0,0) {};
        \node[label = $\gamma$] (alpha) at (-1,-0.7) {};
        
        \node[label = $\delta$] (beta) at (-3,0.8) {};
        \node[label = $2$] (2) at (-2.7,-0.7) {};
        \node[label = b] (b) at (-2,0) {};
        
          \draw (a)-- (alpha);
            
          \draw (b) -- (beta);
          
          \draw ($(b)+(0,0.37)$)
            to[out=0,in=90] ($(alpha)+(0.3,0)$)
            to[out=-90,in=-90] ($(2)+(-0.3,0)$)
            to[out=90,in=180] ($(b)+(0,0.37)$);
            
        \end{tikzpicture}};

    \node[draw] (b3) at (-1,-9) {
        \begin{tikzpicture}[every node/.style={fill,circle,inner sep=0pt,minimum size=4pt}]
        
        \node[label =$a$] (a) at (0,0) {};
        \node[label = $\gamma$] (alpha) at (-1,-1) {};
        
        \draw (a)-- (alpha);
        \end{tikzpicture}};
    
    \coordinate (b3c1) at (-2,-10.5);
    \coordinate (b3c2) at (-1,-10.5);
    \draw (b3) -- node[midway, left] {$(\{a\},\emptyset)$} (b3c1);
    \draw (b3) -- node[midway, right] {$(\{\gamma\},\emptyset)$} (b3c2);
    
    \node[draw] (b4) at (1.4,-9) {
        \begin{tikzpicture}[every node/.style={fill,circle,inner sep=0pt,minimum size=4pt}]
        
        \node[label = $\delta$] (beta) at (-3,1) {};
        
        \draw ($(beta)+(0,0.37)$)
            to[out=0,in=90] ($(beta)+(0.3,0)$)
            to[out=-90,in=0] ($(beta)+(0,-0.37)$)
            to[out=180,in=-90] ($(beta)+(-0.3,0)$)
            to[out=90,in=180] ($(beta)+(0,0.37)$);
            
        \end{tikzpicture}};
    \coordinate (b4c1) at (2,-10.5);
    \draw (b4) -- node[midway,right] {$(\{\delta\},\emptyset)$} (b4c1);
    
    \node[draw] (b5) at (5,-9) {
        \begin{tikzpicture}[every node/.style={fill,circle,inner sep=0pt,minimum size=4pt}]
        
        \node[label = $\delta$] (beta) at (-1,1) {};
        
        \node[label =$a$] (a) at (0,0) {};
        
        \node[label = $2$] (2) at (-1,-1) {};
        
        \draw ($(beta)+(0,0.37)$)
            to[out=0,in=90] ($(beta)+(0.3,0)$)
            to[out=-90,in=0] ($(beta)+(0,-0.37)$)
            to[out=180,in=-90] ($(beta)+(-0.3,0)$)
            to[out=90,in=180] ($(beta)+(0,0.37)$);
        
        \draw ($(a)+(0,0.37)$)
            to[out=0,in=90] ($(a)+(0.3,0)$)
            to[out=-90,in=0] ($(a)+(0,-0.37)$)
            to[out=180,in=-90] ($(a)+(-0.3,0)$)
            to[out=90,in=180] ($(a)+(0,0.37)$);
        
        \draw ($(2)+(0,0.37)$)
            to[out=0,in=90] ($(2)+(0.3,0)$)
            to[out=-90,in=0] ($(2)+(0,-0.37)$)
            to[out=180,in=-90] ($(2)+(-0.3,0)$)
            to[out=90,in=180] ($(2)+(0,0.37)$);    
        \end{tikzpicture}};
    \coordinate (b5c1) at (5,-11.5);
    \draw (b5) -- node[midway, left] {$(\{a2\delta\},\emptyset)$} (b5c1);

    \draw (root) -- node[midway, left] {$(\{1\},\emptyset)$} (b1);
    \draw (root) -- node[midway, right] {$(\emptyset,\{1\})$} (b2);
    \draw (b2) -- node[midway, left] {$(\{b\},\emptyset)$}  (b3);
    \draw (b2) -- node[midway,right] {$(\{\gamma\},\{b\})$} (b4);
    \draw (b2) -- node[midway, right] {$(\emptyset,\{\gamma b\})$} (b5);
\end{tikzpicture}
\caption{Example of a decomposition tree for our running example. The measure $\mu$ is given for each hypergraph, while the conditions are given on the edges.\label{fig:bigTree}}
\end{figure}


\begin{proposition}
Let $\rho$ be a descendant function for a hypergraph class closed under removing edges and removing vertices from edges.
Then for all hypergraphs $\m H$ in such a class, the number of minimal transversals is bounded above by the number of leaves of $\mathcal T_\m H$.
\end{proposition}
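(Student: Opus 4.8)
The plan is to prove the statement by induction on the number of vertices of $\m H$, mirroring the recursive construction of $\mathcal T_\m H$. For the base case $\m H=\emptyset$, the tree $\mathcal T_\m H$ is a single leaf, and the empty hypergraph has exactly one minimal transversal, namely the empty set (it vacuously meets every edge, and it has no proper subset, so it is minimal). Hence the bound holds, with equality, when $\m H$ is trivial.

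For the inductive step, assume $\m H$ is non-trivial and the claim holds for every hypergraph of the class with strictly fewer vertices. First I would record two facts about the children: for each $A=(A^+,A^-)\in\rho(\m H)$, the condition $A$ is non-trivial, so $\m H_A$ loses at least the vertices of $A^+\cup A^-$ and therefore has strictly fewer vertices than $\m H$; and since the class is closed under removing edges and removing vertices from edges, $\m H_A$ still belongs to the class. Thus the induction hypothesis applies to every $\m H_A$. Next, by completeness of $\rho(\m H)$, every minimal transversal $T$ of $\m H$ satisfies at least one condition of $\rho(\m H)$; fix one such condition $A_T=(A_T^+,A_T^-)$. By Lemma~\ref{lemma:proc}, $T\setminus A_T^+$ is a minimal transversal of $\m H_{A_T}$, so $T\mapsto(A_T,\,T\setminus A_T^+)$ defines a map from the minimal transversals of $\m H$ into the disjoint union, over $A\in\rho(\m H)$, of the sets of minimal transversals of $\m H_A$.

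The key point is that this map is injective: if $T$ and $T'$ are sent to the same pair, then $A_T=A_{T'}=:A$ and $T\setminus A^+=T'\setminus A^+$; since satisfying $A$ forces $A^+\subseteq T$ and $A^+\subseteq T'$, we recover $T=(T\setminus A^+)\cup A^+=(T'\setminus A^+)\cup A^+=T'$. Consequently the number of minimal transversals of $\m H$ is at most $\sum_{A\in\rho(\m H)}(\text{number of minimal transversals of }\m H_A)$, which by the induction hypothesis is at most $\sum_{A\in\rho(\m H)}(\text{number of leaves of }\mathcal T_{\m H_A})$. Finally, since $\rho(\m H)$ is non-empty, the root of $\mathcal T_\m H$ is an internal node and the leaves of $\mathcal T_\m H$ are exactly the union of the leaves of the subtrees $\mathcal T_{\m H_A}$, so this last sum is the number of leaves of $\mathcal T_\m H$; this closes the induction.

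I expect the only delicate step to be the injectivity argument, i.e.\ making sure a minimal transversal is not silently double-counted across several conditions it may satisfy; this is resolved by committing to a single chosen $A_T$ per transversal and using $A^+\subseteq T$ to reconstruct $T$ from $T\setminus A^+$. The remaining ingredients — closure of the class, strict decrease of the vertex count under a non-trivial condition, and the additivity of leaf counts over subtrees — are routine once those facts from the excerpt are invoked.
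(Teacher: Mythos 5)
Your proof is correct and follows essentially the same route as the paper's: induction on the number of vertices, using the completeness of $\rho(\m H)$ and Lemma~\ref{lemma:proc} to charge each minimal transversal of $\m H$ to a minimal transversal of some child $\m H_A$, then summing over the subtrees. Your explicit injectivity argument (committing to one condition $A_T$ per transversal and reconstructing $T$ as $(T\setminus A^+)\cup A^+$) makes precise a step the paper leaves implicit, but it is the same argument.
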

\begin{proof}
If $\m H$ is trivial, then it has only one transversal, the empty set.
If the empty set is an edge of $\m H$, then $\m H$ has no transversals.
In both cases, the proposition follows directly from the definition of the tree.
Let us assume now that $\m H$ is non-trivial, and that the proposition is true for all hypergraphs with fewer vertices than $\m H$.

As $\m H$ is non-trivial, $\rho$ is well defined for $\m H$ and $\rho(\m H)$ is a complete family of conditions.
Let $X$ be a minimal transversal of $\m H$.
Then $X$ satisfies at least one condition $A$ in $\rho(\m H)$.
From Lemma~\ref{lemma:proc}, we know that there is a minimal transversal $Y$ of $\m H_A$ such that $Y= X\setminus A^+$.
Then the number of minimal transversals of $\m H$ is at most the sum of the number of minimal transversals in its children.
\end{proof}

\medskip

Bounding the number of leaves in $\mathcal T_\m H$ for all hypergraphs $\m H\in \m C$ is thus bounding $f_3(n)$.
In order to do that, we use Lemma~\ref{lemma:b1} proven by Kullmann~\cite{DBLP:journals/tcs/Kullmann99}.
We denote by $L(\mathcal T)$ the set of leaves of a rooted tree $\mathcal T$ and, for a leaf $\ell\in L(\mathcal T)$, we denote by $P(\ell)$ the set of edges on the path from the root to $\ell$.

\medskip


\begin{lemma}[{\cite[Lemma 8.1]{DBLP:journals/tcs/Kullmann99}}]
\label{lemma:b1}
Consider a rooted tree $\m T$ with an edge labeling $w$ with value in the interval $[0,1]$ such that for every internal node, the sum of the labels on the edges from that node to its children is 1 (that is a transition probability).

Then, $$|L(\mathcal T)|\leq \max_{\ell\in L(\mathcal T)}\left(\prod_{e\in P(\ell)}w(e)\right)^{-1}.$$
\end{lemma}

\medskip

In order to pick an adequate probability distribution, we use a measure.
A \emph{measure} $\mu$ is a function that assigns to any hypergraph $\m H$ in $\m C$ a real number $\mu(\m H)$ such that $0\leq \mu(\m H)\leq |V(\m H)|$. Let $A$ be a condition on the vertices of hypergraph $\m H$ and $\mu$ be a measure. We define
\[\Delta(\m H,\m H_A)=\mu(\m H)-\mu(\m H_A).\]

\medskip

If, for every condition $A$ in $\rho(\m H)$, $\mu(\m H_A)\leq\mu(\m H)$, then we say that $\rho$ is $\mu$-compatible.
In this case, there is a unique positive real number $\tau\geq1$ such that 

\begin{equation*}
\sum_{A\in\rho(\m H)}\tau^{-\Delta(\m H,\m H_A)}=1.
\end{equation*}

\medskip

When $\tau\geq 1$, $\sum_{A\in\rho(\m H)}\tau^{-\Delta(\m H,\m H_A)}$ is a strictly decreasing continuous function of $\tau$.
For $\tau=1$, it is at least 1, since $\rho(\m H)$ is not empty, and it tends to 0 when $\tau$ tends to infinity.

A descendant function defined on a class $\m C$ is $\mu$-bounded by $\tau_0$ if, for every non-trivial hypergraph $\m H$ in $\m C$, $\tau\leq \tau_0$.

\medskip

Now, we adapt the $\tau$-lemma proven by Kullmann~\cite{DBLP:journals/tcs/Kullmann99} to our formalism.

\begin{theorem}[Kullmann~\cite{DBLP:journals/tcs/Kullmann99}]
\label{thm:expo}
Let $\mu$ be a measure and $\rho$ a descendant function, both defined on a class $\m C$ of hypergraphs closed under the operations of removing edges and removing vertices from edges.
If $\rho$ is $\mu$-compatible and $\mu$-bounded by $\tau_0$, then for every hypergraph $\m H$ in $\m C$, $$|L(\mathcal T_\m H)|\leq \tau_0^{h(\mathcal T_\m H)}$$ where $h(\mathcal T_\m H)$ is the height of $\mathcal T_\m H$.
\end{theorem}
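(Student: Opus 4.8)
The plan is to derive Theorem~\ref{thm:expo} from Kullmann's Lemma~\ref{lemma:b1}: I would equip the (finite) tree $\mathcal{T}_\m H$ with the edge labeling that makes each node's outgoing edges a transition probability, and then read off the bound. Concretely, for an internal node $\m G$ of $\mathcal{T}_\m H$ and a child $\m G_A$ of it (so $A \in \rho(\m G)$), label the connecting edge by $w(\m G, \m G_A) = \tau_\m G^{-\Delta(\m G, \m G_A)}$, where $\tau_\m G \geq 1$ is the number defined by~(\ref{eq:sumToOne}).

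The first step is to check that $w$ satisfies the hypotheses of Lemma~\ref{lemma:b1}. Since $\rho$ is $\mu$-compatible, $\mu(\m G_A) \leq \mu(\m G)$, so $\Delta(\m G, \m G_A) \geq 0$; together with $\tau_\m G \geq 1$ this forces $0 < w(\m G, \m G_A) \leq 1$, so every label lies in $[0,1]$. And equation~(\ref{eq:sumToOne}) is exactly the statement that the labels on the edges leaving $\m G$ sum to $1$. Hence Lemma~\ref{lemma:b1} applies and yields
\[
|L(\mathcal{T}_\m H)| \leq \max_{\ell \in L(\mathcal{T}_\m H)} \Bigl(\prod_{e \in P(\ell)} w(e)\Bigr)^{-1}.
\]

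The second step is to estimate the right-hand side for a fixed leaf $\ell$. Writing the root-to-$\ell$ path as $\m H = \m G_0, \m G_1, \dots, \m G_m$ with each $\m G_i$ a child of $\m G_{i-1}$, one has $\bigl(\prod_{e \in P(\ell)} w(e)\bigr)^{-1} = \prod_{i=1}^{m} \tau_{\m G_{i-1}}^{\Delta(\m G_{i-1}, \m G_i)}$. Here I would invoke $\mu$-boundedness ($\tau_{\m G_{i-1}} \leq \tau_0$) together with $\Delta \geq 0$ to replace each base by $\tau_0$, bounding the product by $\tau_0^{\sum_{i=1}^{m} \Delta(\m G_{i-1}, \m G_i)}$. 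The exponent telescopes to $\mu(\m H) - \mu(\m G_m)$, and since a leaf is a trivial (empty) hypergraph, $|V(\m G_m)| = 0$ forces $\mu(\m G_m) = 0$, leaving $\mu(\m H)$ --- which is the $\Delta$-weighted root-to-leaf distance, hence at most $h(\mathcal{T}_\m H)$ (and in any case at most $|V(\m H)|$). This gives $|L(\mathcal{T}_\m H)| \leq \tau_0^{h(\mathcal{T}_\m H)}$; the case where $\m H$ is itself trivial is immediate, as then $\mathcal{T}_\m H$ is a single node with one leaf and height $0$.

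There is no deep difficulty here --- it is essentially a translation of the measure-and-conquer / $\tau$-lemma machinery into this formalism --- but the step to get exactly right, and the one I would flag, is the verification that $w$ is a genuine transition probability. That is where $\mu$-compatibility (so $\Delta \geq 0$, hence $w \leq 1$) and the very definition of $\tau_\m G$ via~(\ref{eq:sumToOne}) (so the labels sum to $1$) are both essential, and it is also the reason the later monotonicity step $\tau_\m G^{\Delta} \leq \tau_0^{\Delta}$ only needs $\Delta \geq 0$ rather than any upper bound on $\Delta$.
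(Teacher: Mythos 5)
Your overall route is exactly the one the paper intends: its own ``proof'' of Theorem~\ref{thm:expo} is only the remark that the theorem comes from Lemma~\ref{lemma:b1}, and your first two steps flesh that out correctly --- the labels $w(\m G,\m G_A)=\tau_{\m G}^{-\Delta(\m G,\m G_A)}$ lie in $[0,1]$ because $\mu$-compatibility gives $\Delta\geq 0$ and $\tau_{\m G}\geq 1$, they sum to $1$ at each internal node by the defining equation~\eqref{eq:sumToOne}, and the telescoping identity $\sum_{i}\Delta(\m G_{i-1},\m G_i)=\mu(\m H)-\mu(\m G_m)\leq\mu(\m H)$ is right. Up to that point you have a complete proof of $|L(\mathcal T_\m H)|\leq\tau_0^{\mu(\m H)}$, which is Kullmann's actual $\tau$-lemma.

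The gap is the last inequality, $\mu(\m H)\leq h(\mathcal T_\m H)$. A $\Delta$-weighted root-to-leaf distance is bounded by the unweighted one only when every weight is at most $1$, and nothing in the hypotheses forces $\Delta(\m G,\m G_A)\leq 1$; indeed in the paper's own application (Lemma~\ref{lemma:number}) almost every condition has $\Delta\geq 2-2\alpha>1$, and there $\mu(\m H)=|V(\m H)|-\alpha m(\m H)\geq(1-\alpha/2)|V(\m H)|$ exceeds the height bound $|V(\m H)|-|S|$, so your inequality points the wrong way precisely where it is needed. The height version also does not follow from the hypotheses by any other route: a root with two children, each at distance $\Delta=50$, satisfies~\eqref{eq:sumToOne} with $\tau_0=2^{1/50}$, has height $1$, and has $2>\tau_0$ leaves. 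So the step you flagged as routine is in fact the one that fails; what your argument proves is $|L(\mathcal T_\m H)|\leq\tau_0^{\max_{\ell}\sum_{e\in P(\ell)}\Delta(e)}\leq\tau_0^{\mu(\m H)}$, and passing to the exponent $h(\mathcal T_\m H)$ requires an additional hypothesis relating the per-step measure drop to $1$ (or a per-node bound $|\rho(\m G)|\leq\tau_0$) that the statement, as written, does not supply. This difficulty is arguably inherited from the theorem statement itself rather than introduced by you, but your proposal does not close it.
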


\medskip

We now use Theorem~\ref{thm:expo} to provide an upper bound to $f_3(n)$. We start by proving Lemma~\ref{lemma:number} using an approach similar to that of~\cite{DBLP:journals/dm/LoncT08}.

\medskip

\begin{lemma}
\label{lemma:number}
There is a measure $\mu$ defined for every hypergraph $\m H$ in $\m C$ and a descendant function $\rho$ for $\m C$ that is $\mu$-compatible and $\mu$-bounded by $1.8393$.
\end{lemma}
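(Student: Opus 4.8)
The plan is to produce explicit choices of $\mu$ and $\rho$ and then check the two required properties. The first step is to record the extra structure imposed by membership in $\m C$: in a tripartite $3$-uniform hypergraph every edge meets $S$ exactly once, and by the standing assumption on admissible conditions a vertex of $S$ can be removed from an edge only together with the whole edge, so in every $\m H\in\m C$ each edge is of one of the four types $\{s\}$, $\{s,b\}$, $\{s,c\}$, $\{s,b,c\}$ with $s\in S$ and $b,c$ in the two remaining parts. For the measure I would give each vertex a weight depending only on the part it lies in, a value $\sigma$ for the vertices of $S$ and a value $\omega$ for the others (refining by degree should a particular case demand it), and pick $\sigma$ and $\omega$ so that $0\le\mu(\m H)\le\tfrac23\,|V(\m H)|$; the intuition is that a vertex of $S$ is already ``half decided'', since once the vertices of the other two parts are fixed the trace on $S$ of a minimal transversal is essentially determined, so the vertices of $S$ should be the cheaper ones. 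The slack $\mu\le\tfrac23|V|$ is what later turns the $1.8393$ of this lemma into the $1.5012$ of Theorem~\ref{thm:upper} via Theorem~\ref{thm:expo}, since $1.8393^{2/3}<1.5012$.

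Next I would define $\rho(\m H)$ by a case distinction on the local structure near a well-chosen vertex. If $\m H$ has an edge of arity $1$, say $\{s\}$, then $s$ is in every transversal and I set $\rho(\m H)=\{(\{s\},\emptyset)\}$, a single forced step for which $\tau_\m H=1$. Otherwise I pick a vertex $v$ together with the edges through it and branch on the two cases $v\in T$ and $v\notin T$. The delicate point is that a vertex $s\in S$ cannot simply be forbidden, because the standing assumption requires every edge meeting $A^-\cap S$ to also meet $A^+$; so the branch $s\notin T$ has to be expanded into one condition per minimal way of choosing, inside each edge through $s$, a non-$S$ vertex to place in $A^+$. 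I would then verify that the family so obtained, together with $(\{s\},\emptyset)$ (or with the analogous conditions when $v$ is picked outside $S$), is non-empty, non-trivial, complete, and respects the standing assumption, so that it is an admissible value of the descendant function.

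With $\mu$ and $\rho$ in hand, $\mu$-compatibility is immediate: conditioning only deletes vertices and only shortens edges, so no vertex gains weight and $\mu(\m H_A)\le\mu(\m H)$ for every $A\in\rho(\m H)$ (one takes the weight non-increasing in the degree if degrees are used). The substance of the argument is $\mu$-boundedness. For each branch produced by $\rho$ I would lower-bound the entry $\Delta(\m H,\m H_A)=\mu(\m H)-\mu(\m H_A)$ of the branching vector, counting the weight of the vertex or vertices forced into the transversal, of the other vertices of the deleted edges that thereby become isolated, and of any cascade triggered (an arity-$2$ edge that loses a vertex becomes an arity-$1$ edge, which forces its $S$-vertex, and so on), and then check that $\sum_{A\in\rho(\m H)}1.8393^{-\Delta(\m H,\m H_A)}\le 1$, which gives $\tau_\m H\le 1.8393$. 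The main obstacle is exactly this step: the case analysis is sizeable, each case turns into an inequality constraining $\sigma$ and $\omega$ (and any per-degree refinements), all these constraints must be simultaneously satisfiable, and the optimal feasible weights are pinned down by the tightest configurations, the resulting bound $1.8393$ being the largest value of $\tau_\m H$ that occurs, numerically the tribonacci constant, the real root of $\tau^{3}=\tau^{2}+\tau+1$. Setting up this optimization over the weights and certifying every case against the chosen values is where a computer search naturally enters.
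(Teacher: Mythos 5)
There is a genuine gap, and it sits exactly where you predicted the ``substance'' would be: your choice of measure is incompatible with the bound you are trying to certify. Consider the unavoidable configuration of a vertex $a\in S$ that lies in no $2$-edge and in a $3$-edge $abc$, with $b$ and $c$ each lying in further edges (this is the paper's worst case, and it is where the tribonacci constant comes from via the branching vector $(1,2,3)$). Any complete family of conditions here has, in the worst case, one branch that deletes only $b$, one that deletes only $b,c$, and one that deletes $a,b,c$; with part-based weights $\sigma$ on $S$ and $\omega$ elsewhere the branching vector is $(\omega,\,2\omega,\,\sigma+2\omega)$. To get $\tau\leq 1.8393=\tau(1,2,3)$ you need $\omega\geq 1$ and $\sigma+2\omega\geq 3$, hence $\sigma\geq 1$ and $\mu(\m H)=|V(\m H)|$; whereas your constraint $\mu\leq\tfrac23|V|$ forces $\sigma=0,\omega=1$ at best, giving the vector $(1,2,2)$, i.e.\ $\tau=2$ and the final bound $2^{2n/3}=1.5874^n$ --- exactly the trivial bound. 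No feasible $(\sigma,\omega)$ satisfies both requirements, so the optimization you defer to a computer search has an empty feasible region in this case.

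The paper escapes this by decoupling the two roles you are asking the measure to play. Its measure is $\mu(\m H)=|V(\m H)|-\alpha\, m(\m H)$ with $\alpha=0.145785$, where $m(\m H)$ is the maximum number of pairwise disjoint $2$-edges: every vertex carries essentially unit weight, so in the case above the decreases really are $(1,2,3)$ and $\tau=1.8393$ is attained, while the matching term supplies the extra savings needed in the cases where $2$-edges are present (your measure, lacking this term, would also struggle there). The factor $2/3$ in the final exponent then comes not from bounding $\mu$ by $\tfrac23|V|$ but from bounding the \emph{height} of the branching tree by $|V(\m H)|-|S|$, using the observation that every condition names at least one vertex outside $S$. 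Your structural remarks (every edge meets $S$ exactly once, forbidding an $S$-vertex must be accompanied by covering its edges, $1.8393$ is the root of $\tau^{3}=\tau^{2}+\tau+1$) are all correct, but the argument as proposed cannot be completed without replacing the measure and the mechanism by which the exponent is reduced to $2n/3$.
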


\begin{proof}

Let $\m H$ be a hypergraph belonging to the class $\m C$, i.e., a $k$-partite, $k\leq 3$ hypergraph that contains a set S of vertices that is a minimal transversal such that no two vertices of $S$ belong to a same edge.

\medskip
We choose, as the measure $\mu(\m H)$, 

\[\mu(\m H) = |V(\m H)| - \alpha m(\m H)\]
\noindent
where $m(\m H)$ is the maximum number of pairwise disjoint 2-element edges in $\m H$ (i.e. the size of its maximum matchings) and $\alpha=0.145785$. The same measure is used in~\cite{DBLP:journals/dm/LoncT08} (with a different $\alpha$).

\medskip

We use Theorem~\ref{thm:expo} to bound the number of leaves in the tree $\mathcal T_\m H$ and thus the number of minimal transversals in $\m H$.
To do so, we define a descendant function $\rho$ that assigns a family of conditions to $\m H$ depending on its structure.
This takes the form of a case analysis.

\medskip
In each case, we consider a vertex $a\in S$ and its neighbours. The conditions are chosen in such a way that they are pairs $(A^+,A^-)$ of subsets of these neighbours and are sometimes strengthened to contain $a$ if and only if its presence in $A^+$ or $A^-$ is implied by our hypotheses. This causes every condition $A$ to respect the property that $A^+$ intersects all the edges containing a vertex of $A^-\cap S$, which lets $\m H_{A}$ remain in the class $\m C$.

\medskip
In each case $i$ and for every condition $A\in \rho(\m H)$, we find a bound $k_{\m H, A}$ such that

\[k_{\m H, A}\leq\Delta(\m H, \m H_A)\]

\noindent
and a unique positive real number $\tau_i$ that satisfies the equation

\begin{equation}
\label{eq:sumToOne}
\sum_{A\in \rho(\m H)} \tau_i^{-k_{\m H,A}}=1
\end{equation}

We show that $\tau_i\leq 1.8393$ for all $i$.
Let $\tau_0 = 1.8393$.

\medskip
As all our conditions involve at least one element from $V(\m H)\setminus S$, the height of $\mathcal T_\m H$ is bounded by $|V(\m H)|-|S|$. Hence, we have $$|L(\mathcal T_\m H)|\leq \tau_0^{|V(\m H)|-|S|}$$

\medskip
In the remainder of the proof, we will write conditions as sets of expressions of the form $a$ and $\overline b$ where $a$ means that $a$ is in $A^+$ and $\overline b$ means that $b$ is in $A^-$.
For example, the condition $(\{a,c\},\{b,d,e\})$ will be written as $ac\overline{bde}$ and the condition $(\{b,c\},\emptyset)$ will be written as $bc$.
For a vertex $v$, we denote by $d_2(v)$ the number of 2-edges that contain $v$, and by $d_3(v)$ the number of 3-edges that contain $v$.

\medskip
For each case, we suppose that the previous ones do not apply.

\medskip

\textbf{Case 1:} $d_2(a)\geq 2$ : the hypergraph $\m H$ contains a vertex $a$ from $S$ that belongs to at least two 2-edges $\{a,b\}$ and $\{a,c\} (Figure~\ref{fig:case1})$.

\begin{figure}[H]
    \centering
\begin{tikzpicture}
    \node[inner sep = 1pt] (a) at (0,0) {$a$};
    \node[inner sep = 1pt] (b) at (1,1) {$b$};
    \node[inner sep = 1pt] (c) at (1,-1) {$c$};

    \coordinate (coord0) at (0,-1);
    \coordinate (coord1) at (-0.8,1.2);
    \coordinate (coord2) at (-0.8,-0.5);

    \draw (a) -- (b);
    \draw (a) -- (c);
    \draw[dashed] (a) -- (coord0);

    \draw[dashed] ($(coord2)+(-0.3,-0.2)$)
        to[out=0,in=-90] ($(a)+(0.3,0)$)
        to[out=90,in=0] ($(coord1)+(-0.3,0.2)$);
    \end{tikzpicture}
    \caption{Case 1: The vertex $a\in S$ is in at least two 2-edges and may be part of some other 2-edges or 3-edges.\label{fig:case1}}
\end{figure}
A minimal transversal of $\m H$ either contains or does not contain $b$, and as such $\{b,\overline b\}$ is a complete family of conditions for $\m H$.
Similarly, a minimal transversal of $\m H$ either contains or does not contain $c$ so $\{bc,b\overline c,\overline b\}$ is also a complete family of conditions.
Minimal transversals of $\m H$ that do not contain $b$ or $c$ necessarily contain $a$ (as $\{a,b\}$ or $\{a,c\}$ would not be covered otherwise).
Hence $\{bc,ab\overline c, a\overline b\}$ is a complete family of conditions for $\m H$.

\medskip

Let $M$ be a maximum set of pairwise distinct 2-edges (matching) of $\m H$.
By removing $k$ vertices we decrease the size of a maximum matching by at most $k$. Hence, $|V(\m H_{A})| \leq |V(\m H)| - 2$ and $m(\m H_A)\geq m(\m H)-2$ when $A\in \{bc, a\overline b\}$. Similarly, $|V(\m H_{ab\overline c})| \leq |V(\m H)| - 3$ and $m(\m H_{ab\overline c})\geq m(\m H)-3$.
Thus, we have

\begin{equation}
\Delta(\m H, \m H_A)\geq\begin{cases}
2-2\alpha & \text{for~} A\in\{bc, a\overline b\}\\
3-3\alpha & \text{for~} A = ab\overline c
\end{cases}.
\end{equation}

Equation~\eqref{eq:sumToOne} becomes $2\tau_1^{2\alpha -2}+\tau_1^{3\alpha -3}=1$.
For our chosen $\alpha$, we have that $\tau_1\leq\tau_0$.

\medskip

\textbf{Case 2:} $d_2(a)=1$ :
the hypergraph $\m H$ contains a vertex $a$ from $S$ that belongs to a unique 2-edge $\{a,b\}$.
We break down this case into two sub-cases depending on whether or not $a$ belongs to some 3-edges: $d_3(a)=0$ and $d_3(a)\geq 1$.

\begin{figure}[H]
    \centering
\begin{tikzpicture}
    \node[inner sep = 1pt] (a) at (0,0) {$a$};
    \node[inner sep = 1pt] (b) at (1,1) {$b$};

    \coordinate (coord1) at (-0.8,1.2);
    \coordinate (coord2) at (-0.8,-0.5);

    \draw (a) -- (b);

    \draw[dashed] ($(coord2)+(-0.3,-0.2)$)
        to[out=0,in=-90] ($(a)+(0.3,0)$)
        to[out=90,in=0] ($(coord1)+(-0.3,0.2)$);
    \end{tikzpicture}
    \caption{Case 2: The vertex $a$ is part of only one 2-edge $\{a,b\}$ and may be part of other 3-edges.\label{fig:case2}}
\end{figure}

Since $a$ is in only one 2-edge, removing both $a$ and $b$ decreases the size of a maximum matching by at most 1.

\begin{itemize}
\item $d_3(a)=0$ : $a$ is in a single 2-edge $\{a,b\}$ and no 3-edges.
A minimal transversal of $\m H$ either contains or does not contain $b$.
As such, $\{b,\overline b\}$ is a complete family of conditions for $\m H$.
As $\{a,b\}$ is the only edge containing $a$, a minimal transversal of $\m H$ that contains $b$ cannot contain $a$.
Similarly, every minimal transversal of $\m H$ that does not contain $b$ necessarily contains $a$.
This makes $\{b\overline a, a\overline b\}$ a complete family of conditions for $\m H$.

Let $M$ be a maximum set of pairwise disjoint 2-edges of $\m H$. As $\{a,b\}$ is the only edge containing $a$, $b$ belongs to one of the edges in $M$. The hypergraphs $\m H_{b\overline a}$ and $\m H_{a \overline b}$ contain all the edges in $M$ except for the one containing $b$. Thus, $m(\m H_{b\overline a}) = m(\m H_{a \overline b}) \geq m(\m H) - 1$. Since $|V(\m H_{b\overline a})| = |V(\m H_{a \overline b})| \leq |V(\m H)| - 2$, we have

\begin{equation}
\Delta(\m H, \m H_A)\geq2-\alpha \text{~for~} A\in\{b\overline a,a\overline b\}.
\end{equation}
Equation~\eqref{eq:sumToOne} becomes $2\tau_{2.1}^{\alpha-2}=1$.
For our chosen $\alpha$, we have that $\tau_{2.1}\leq\tau_0$.

\begin{figure}[H]
    \centering
\begin{tikzpicture}
    \node[inner sep = 1pt] (a) at (0,0) {$a$};
    \node[inner sep = 1pt] (b) at (1,1) {$b$};
    \node[inner sep = 1pt] (C) at (1,-1) {$c$};
    \node[inner sep = 1pt] (d) at (0,-1) {$d$};

    \coordinate (coord1) at (-0.8,1.2);
    \coordinate (coord2) at (-0.8,-0.5);

    \draw (a) -- (b);

    \draw[dashed] ($(coord2)+(-0.3,-0.2)$)
        to[out=0,in=-90] ($(a)+(0.3,0)$)
        to[out=90,in=0] ($(coord1)+(-0.3,0.2)$);
    \draw ($(a)+(0,0.3)$)
        to[out=0, in=90] ($(c)+(0.3,0)$)
        to[out = -90, in = -90] ($(d)+(-0.3,0)$)
        to[out = 90, in = 180] ($(a)+(0,0.3)$);
        
    \end{tikzpicture}
    \caption{Case 2.2: The vertex $a$ is part of only one 2-edge $\{a,b\}$ and at least one 3-edge $\{a,c,d\}$.\label{fig:case22}}
\end{figure}

\item $d_3(a)\geq 1$ : $a$ is in a single 2-edge and in some 3-edges, one of which being $\{a,c,d\}$ (Figure~\ref{fig:case22}).
We start with the complete family of conditions $\{bc,bd\overline c, b\overline{cd}, \overline b\}$.
Any minimal transversal of $\m H$ that does not contain either $b$ or both $c$ and $d$ necessarily contains  $a$.
This makes $\{bc,bd\overline c, ab\overline cd, a\overline b\}$ a complete family of conditions for $\m H$.
Removing either $b$, $c$ or $d$ from the hypergraph decreases the size of the maximum matching by at most one. As mentioned previously, removing both $a$ and $b$ cannot decrease the size of the maximum matching by more than 1. As such, $m(\m H_{A}) \geq m(\m H) - |A|$ when $A\in \{bc, bd\overline c, a\overline b\}$ and $m(\m H_{ab\overline{cd}}) \geq m(\m H) - 3$.
We obtain 
\begin{equation}
\Delta(\m H,\m H_A)\geq\begin{cases}
2-2\alpha & \text{if~} A=bc\\
3-3\alpha & \text{if~} A=bd\overline c\\
4-3\alpha & \text{if~} A=ab\overline{cd}\\
2-\alpha & \text{if~} A=a\overline b
\end{cases}.
\end{equation}
Equation~\eqref{eq:sumToOne} becomes $\tau_{2.2}^{2\alpha-2}+\tau_{2.2}^{3\alpha-3}+\tau_{2.2}^{3\alpha-4}+\tau_{2.2}^{\alpha-2}=1$.
For our chosen $\alpha$, we have that $\tau_{2.2}\leq\tau_0$.

\end{itemize}

\medskip

\textbf{Case 3:} $d_2(a)=0$ and $d_3(a)\geq 1$ : the hypergraph $\m H$ contains a vertex $a$ from $S$ that is in  no 2-edge and in some 3-edges, one of which being $\{a,b,c\}$ (Figure~\ref{fig:case3}).

\begin{figure}[H]
\centering
\begin{tikzpicture}
\node (a) at (0,0) {$a$};

\node (b) at (-0.8,1.2) {$b$};
\node (c) at (-0.8,-0.5) {$c$};

\coordinate (coord3) at (0.8,1.2);
\coordinate (coord4) at (0.8,-0.5);

  \draw ($(c)+(0,-0.2)$)
    to[out=0,in=-90] ($(a)+(0.3,0)$)
    to[out=90,in=0] ($(b)+(0,0.2)$)
    to[out=180, in = 180] ($(c)+(0,-0.2)$);
    
  \draw[dashed] (coord4)
    to[out=180,in=-90] ($(a)+(-0.3,0)$)
    to[out=90,in=180] (coord3);
\end{tikzpicture}
\caption{Case 3: The vertex $a$ is in at least one 3-edge, but in no 2-edge.\label{fig:case3}}
\end{figure}

We start with the conditions $\{b,c\overline b, \overline{bc}\}$. Any minimal transversal that does not contain $b$ and $c$ necessarily contains $a$ so we strengthen the family of conditions to $\{b,c\overline b,a\overline{bc}\}$.
Since we do not have any 2-edge anymore (because previous cases do not apply), we cannot decrease the size of a maximum matching.
We obtain 
\begin{equation}
\Delta(\m H, \m H_A)\geq\begin{cases}
1 & \text{if~} A=b\\
2 & \text{if~} A=c\overline b\\
3 & \text{if~} A=a\overline{bc}
\end{cases}.
\end{equation}

Equation~\eqref{eq:sumToOne} becomes $\tau_{3}^{-1}+\tau_{3}^{-2}+\tau_{3}^{-3}=1$.
For our chosen $\alpha$, we have that $\tau_{3}\leq\tau_0$.
\end{proof}

\medskip

This proof ensures that there is a measure $\mu$ and a descendant function $\rho$ for our class of hypergraphs such that $\rho$ is $\mu$-bounded by $1.8393$.
This allows us to formulate the following theorem.

\begin{theorem}
\label{thm:number2}
The number of minimal transversals in a hypergraph belonging to the class $\m C$ is less than $1.8393^{n-|S|}$.
\end{theorem}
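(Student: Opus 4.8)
The plan is to combine the machinery assembled so far; Theorem~\ref{thm:number2} is essentially a corollary of Lemma~\ref{lemma:number}. First I would invoke Lemma~\ref{lemma:number} to fix a measure $\mu$ on $\m C$ together with a descendant function $\rho$ for $\m C$ that is $\mu$-compatible and $\mu$-bounded by $\tau_0 = 1.8393$. As noted in the paragraph introducing $\m C$, this class is closed under removing edges and under removing vertices from edges, so the hypotheses of Theorem~\ref{thm:expo} are satisfied. Applying it yields, for every $\m H \in \m C$, the bound $|L(\mathcal T_\m H)| \leq \tau_0^{h(\mathcal T_\m H)}$, where $\mathcal T_\m H$ is the rooted labelled tree built from $\rho$ and $h(\mathcal T_\m H)$ is its height.

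The next step is to control $h(\mathcal T_\m H)$ in terms of $n = |V(\m H)|$ and $|S|$. The point --- already recorded inside the proof of Lemma~\ref{lemma:number} --- is that every condition $A=(A^+,A^-)$ produced by $\rho$ involves at least one vertex of $V(\m H)\setminus S$: each case of that proof branches on the neighbours of a chosen vertex $a\in S$, and since $S$ is independent (no two of its vertices share an edge) these neighbours lie outside $S$. Every vertex occurring in a condition disappears when passing from a hypergraph to its child, and no vertices are created, so along any root-to-leaf path the number of vertices lying outside $S$ strictly decreases at each step, while the $S$-part of a child is exactly the surviving part of $S$. Hence a path of length $\ell$ forces $\ell \leq n - |S|$, that is, $h(\mathcal T_\m H) \leq n-|S|$. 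Combining with the previous paragraph, $|L(\mathcal T_\m H)| \leq \tau_0^{\,n-|S|}$.

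Finally, the Proposition bounding the number of minimal transversals by the number of leaves tells us that $\m H$ has at most $|L(\mathcal T_\m H)|$ minimal transversals. Chaining the inequalities gives at most $1.8393^{\,n-|S|}$ of them; since $\tau_0=1.8393$ is chosen strictly above each $\tau_i$ appearing in the proof of Lemma~\ref{lemma:number}, there is slack and the inequality is in fact strict, which is what the statement claims.

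I do not expect a genuine obstacle: the theorem merely repackages Lemma~\ref{lemma:number}, Theorem~\ref{thm:expo} and the leaf-counting Proposition. If anything needs care it is the height estimate of the second paragraph --- one must verify that the quantity that decreases along the tree is $|V(\m H_i)\setminus S|$ rather than $|V(\m H_i)|$, since the $S$-part itself may shrink as one descends, and that this quantity drops by at least one at every branching precisely because the conditions always touch $V(\m H)\setminus S$.
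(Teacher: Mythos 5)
Your proposal is correct and follows essentially the same route as the paper: it invokes Lemma~\ref{lemma:number} to obtain $\mu$ and $\rho$, applies Theorem~\ref{thm:expo} with the height bound $h(\mathcal T_\m H)\leq n-|S|$ (which the paper justifies, as you do, by noting that every condition involves at least one vertex of $V(\m H)\setminus S$), and concludes via the leaf-counting proposition. Your extra care about which quantity decreases along the tree is a sound elaboration of the same argument, not a different one.
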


\begin{proof}
Let $\mu$ and $\rho$ be the measure and descendant function used in Lemma~\ref{lemma:number}'s proof. The height $h(\mathcal T_\m H)$ of the tree is less than $n-|S|$ so applying Theorem~\ref{thm:expo} yields the result.
\end{proof}

\medskip
Theorem~\ref{thm:upper} is a straightforward corollary of Theorem~\ref{thm:number2}.

\begin{thm:up}For any integer $n$, $$f_3(n)\leq 1.8393^{2n/3}\leq 1.5012^n.$$
\end{thm:up}
\begin{proof}
The vertices of a tripartite 3-uniform hypergraph can be partitioned into three minimal transversals so any of them can be $S$. The minimization of the bound is achieved by using the biggest set which, in the worst case, has size $n/3$.
\end{proof}

\section{Discussion and Conclusion}



In this paper, we showed that the maximum number of minimal transversals in $(3,3)$-hypergraphs of order $n$ is between $c1.4977^n$, where $c$ is a constant, and $1.5012^n$. Both bounds can be used to better analyse the worst-case complexity of algorithms for mining 3-dimensional Boolean data, such as TRIAS~\cite{jaschke2006trias}, Data-Peeler~\cite{cerf2008data} or the one proposed by Makhalova and Nourine~\cite{makhalova2017incremental}.

\medskip
As a future work, the upper bound could be improved through the same approach by choosing a better measure or branching. The lower bound could maybe be improved by finding other worst cases through computer search. The worst cases we managed to find all corresponded to solutions of the chess rook problem in $\frac{n}{3}\times\frac{n}{3}\times\frac{n}{3}$ matrices up to $n=5$ (see Figure~\ref{tab:hyp}). However, this did not seem to be the case for $n=6$ so we postulate that it does not work for $n>5$.

\medskip
There is also a growing interest for pattern mining in $k$-dimensional datasets with $k>3$ as reality cannot always be represented by mere ternary relations.
For this reason, it would be interesting to devise a more general proof, following the same schema, to bound the maximal number of minimal transversals in $(k,k)$-hypergraphs.

\section*{Acknowledgments}
Alexandre Bazin was supported by the European Union's EUROSTAR PSDP project. Giacomo Kahn was supported by the European Union's ``\emph{Fonds Europ\'een de D\'eveloppement R\'egional (FEDER)}'' program though project AAP ressourcement  S3 -- DIS 4 (2015-2018). Kaveh Khoshkhah was supported by the Estonian Research Council, ETAG (Eesti Teadusagentuur), through PUT Exploratory Grant \#620.

\section*{Bibliography}
\bibliographystyle{unsrt}
\bibliography{Biblio}

\end{document}